\newtheorem{theorem}{Theorem}
\newtheorem{remark}[theorem]{Remark}
\newcommand{\Z}{\mathbb{Z}}
\def\hull#1{\langle#1\rangle}
\def\nbd{neighbourhood}
\begin{document}


\baselineskip=17pt

\title[Countably infinite bounded abelian groups admit no non-discrete locally minimal group topologies]{Countably infinite bounded abelian groups admit no non-discrete locally minimal group topologies}

\author{Dekui Peng}

\keywords{Locally minimal groups; Minimal groups}
\subjclass[2010]{ 22A05}
\date{December 29, 2019}

\begin{abstract}
In this note we show that if $G$ is a countably infinite abelian group such that $nG=0$ for some integer $n$, then the only locally minimal group topology on $G$ is the discrete one.
\end{abstract}
\maketitle

A Hausdorff topological group $(G, \tau)$ is called locally minimal if there exists a \nbd\ $U$ of the identity such that for every weaker Hausdorff group topology $\tau'$ on $G$, if $U$ is a $\tau'$-\nbd\ of the identity, then $\tau'=\tau$.

In \cite{ACDD}, Au{\ss}enhofer a et al. posed the following question (see also \cite{DM}):

Does the group $\bigoplus_{\omega}\Z(2)$ admit a non-discrete locally minimal group topology?

Here we will answer this question in negative.

\begin{theorem}Let $n$ be a positive integer and $G$ a countably infinite abelian group such that $nG=0$, then the only locally minimal group topology on $G$ is the discrete topology.\end{theorem}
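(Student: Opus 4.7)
I argue by contradiction: assume $(G,\tau)$ is a non-discrete Hausdorff locally minimal topology with witness neighbourhood $U$ of $0$, and construct a strictly coarser Hausdorff group topology $\tau'\subsetneq\tau$ on $G$ in which $U$ remains a neighbourhood of $0$.

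By Pr\"ufer's theorem the countably infinite bounded abelian group $G$ decomposes as a finite direct sum of homogeneous components $\bigoplus_\omega\Z(p^k)$ (for various prime powers $p^k\mid n$) together with a finite part, so after peeling off such a component it suffices to treat $G=\bigoplus_\omega\Z(p^k)$. Because $(G,\tau)$ is Hausdorff and non-discrete every neighbourhood of $0$ is infinite. Fixing a symmetric base $U=V_0\supseteq V_1\supseteq V_2\supseteq\cdots$ with $V_{i+1}+V_{i+1}\sub V_i$ and $p^k V_{i+1}\sub V_i$, I pick inductively non-zero $x_i\in V_{2i}$ of maximal order $p^k$ that are $\Z(p^k)$-independent of $\hull{x_1,\dots,x_{i-1}}$; this is possible since $V_{2i}$ is infinite while $\hull{x_1,\dots,x_{i-1}}$ is finite. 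A telescoping argument using the nesting forces $H_0:=\hull{x_1,x_2,\dots}\sub U$. Since $\{x_i\}$ extends to a $\Z(p^k)$-basis of $G$, $H_0$ is pure in $G$ and therefore a direct summand by Kulikov's theorem: $G=H_0\oplus L$ with $H_0\cong\bigoplus_\omega\Z(p^k)$.

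To build $\tau'$, equip $H_0$ with a strictly coarser Hausdorff group topology $\sigma_H\subsetneq\tau|_{H_0}$. Such a $\sigma_H$ always exists: if $\tau|_{H_0}$ is discrete, take $\sigma_H$ to be any non-discrete precompact topology on $H_0$ (for instance the one induced by the dense embedding $H_0\hookrightarrow\prod_\omega\Z(p^k)$); if $\tau|_{H_0}$ is non-discrete, then $(H_0,\tau|_{H_0})$ cannot be minimal, because a countably infinite bounded abelian group carries no minimal Hausdorff group topology (its bounded compact abelian completion contains abundant finite cyclic subgroups missing any fixed countable dense subgroup, precluding essentiality), so a strictly coarser Hausdorff group topology again exists. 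Now define $\tau'$ on $G=H_0\oplus L$ to have neighbourhood base at $0$ consisting of sets $W_H+W_L$ with $W_H$ a $\sigma_H$-neighbourhood of $0$ in $H_0$ and $W_L$ a $\tau|_L$-neighbourhood of $0$ in $L$. Then $\tau'$ is Hausdorff (both factors are and the diagonal map into $H_0\times L$ is injective); $U$ is a $\tau'$-neighbourhood of $0$ (choose $W\sub V_1$ with $W+W\sub U$; then $H_0+(W\cap L)\sub W+W\sub U$); and $\tau'\subsetneq\tau$ because $\sigma_H\subsetneq\tau|_{H_0}$. This yields the desired contradiction.

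The principal obstacle will be verifying $\tau'\sub\tau$: this demands that the algebraic projections $G\to H_0$ and $G\to L$ associated to the Kulikov decomposition be $\tau$-continuous, which does not follow automatically from the purely algebraic splitting. I expect the resolution to sharpen step 2 by arranging that $H_0$ is also $\tau$-closed --- so $L\cong G/H_0$ inherits the Hausdorff quotient topology $\tau/H_0$ making the projection onto $L$ continuous --- and then choosing $\sigma_H$ coarser than the resulting subspace topology on $H_0$.
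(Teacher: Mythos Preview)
The gap you flag in your final paragraph is genuine, and your proposed fix does not close it. Even if you arrange $H_0$ to be $\tau$-closed, you only obtain continuity of the quotient map $G\to G/H_0$, and hence (after identifying $L$ with $G/H_0$) continuity of the projection onto the second factor. Continuity of the projection $G\to H_0$ onto the \emph{first} factor is an independent requirement: an algebraic complement $L$ to a closed subgroup $H_0$ need not be closed, and the retraction $G\to H_0$ is in general discontinuous. Without it, there is no reason for a basic $\tau'$-set $W_H+W_L$ to contain any $\tau$-neighbourhood of $0$, so $\tau'\subseteq\tau$ fails and no contradiction with local minimality is obtained. (A smaller issue: your argument that $U$ is a $\tau'$-neighbourhood uses $H_0+(W\cap L)\subseteq W+W$, which needs $H_0\subseteq W$; you only arranged $H_0\subseteq U=V_0$, not $H_0\subseteq V_1$. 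This is easily repaired by shifting indices, but it should be said.) The initial ``peeling off'' reduction to a homogeneous summand $\bigoplus_\omega\Z(p^k)$ has the same defect: the primary decomposition of a bounded abelian group is topological (the projections are given by multiplication by integers), but the further splitting into homogeneous layers inside a $p$-primary component is non-canonical and need not respect $\tau$.

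Ironically, the decisive idea is already present in your sketch: your parenthetical assertion that \emph{a countably infinite bounded abelian group carries no minimal Hausdorff group topology} (via Prodanov--Stoyanov, compactness of the completion, uncountability of its socle, and essentiality) is exactly the engine of the paper's proof. The paper avoids the projection problem entirely by invoking \cite[Lemma~2.3]{DHPXX}, which furnishes a closed neighbourhood $U$ of $0$ with the property that \emph{every closed subgroup of $G$ contained in $U$ is minimal}; this lemma internalises, once and for all, the passage from ``coarser topology on a subgroup'' to ``coarser topology on $G$ keeping $U$ a neighbourhood''. The paper then uses Zorn's Lemma to produce an infinite subgroup $K$ inside $U$, takes its closure $\overline{K}$, and reaches a contradiction from the minimality of $\overline{K}$ via precisely the essentiality/cardinality argument you sketched. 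So the correct route is not to build $\tau'$ by hand on $G$, but to locate an infinite closed subgroup inside the witnessing neighbourhood and show it cannot be minimal.
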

\begin{proof}By way of contradiction we assume that $G$ carries a non-discrete locally minimal group topology.
Then every \nbd\ of 0 is infinite.
According to \cite[Lemma 2.3]{DHPXX}, there exists a closed \nbd\ $U$ of 0 such that every closed subgroup of $G$ contained in $U$ is minimal.
By Zorn's Lemma we know that there is a maximal element $K$ in the poset of subgroups of $G$ contained in the interior of $U$.
We will see that $K$ is infinite.

Indeed, for every $k\in K$, there exists a \nbd\ $V_k$ of $0$ with $k+V_k\subset U$.
Let $V=\bigcap_{k\in K}V_k$, then we obtain that $K+V\subset U$.
If $K$ were finite, then $V$ would be a \nbd\ of 0.
So we can choose a \nbd\ $W$ of 0 with $\underbrace{W+W+...+W}_{n~times}\subset V$.
Since $W$ is infinite, $W\setminus K$ is non-empty.
Take $x\in W\setminus K$, then $\hull{x}\subset V$ and hence, $K+\hull{x}\subset U$.
Evidently $K+\hull{x}$ properly contains $K$, this contradicts to the maximality of $K$.

By the above argument we see that the closure of $K$, $\overline{K}$ is a countably infinite closed subgroup of $G$ contained in $U$, so it is minimal.
According to the famous Prodanov-Stoyanov theorem, $\overline{K}$ is precompact.
Then the  completion $H$ of $\overline{K}$ is compact.
Since $nH=0$, there exists a prime divisor $p$ of $n$ such that $L:=\{x\in H:px=0\}$ is infinite.
Note that $L$ is also compact, so $|L|\geq 2^\omega$ (see \cite[Corollary 5.2.7.]{AT}).
By minimal criterion, $\overline{K}$ should be essential in $H$, i.e., every non-trivial closed subgroup of $H$ meets $\overline{K}$ non-trivially.
Therefore, $L=Soc(L)\leq \overline{K}$.
This contradicts to the countability of $\overline{K}$.
\end{proof}

\begin{remark}{\em Dikranjan noted that Protasov also found a negative solution to this question while his solution has not been published yet.}\end{remark}

{\bf Acknowledgements.} I am grateful to Professor Dikran Dikranjan for his guidance and suggestions.
Also I would like to thank Professor Vladimir Kadets who found a mistake in an earlier version.


\bigskip

\end{document}